\theoremstyle{definition}
\newtheorem{definition}{Definition}
\newtheorem{example}{Example}
\newtheorem{proposition}{Proposition}
\newtheorem{remark}{Remark}
\begin{document}
\title[Deformed quantum planes and universal enveloping algebras]{Notes on formal deformations of quantum planes and universal enveloping algebras}

\author{Per B\"ack}
\address{Division of Applied Mathematics, The School of Education, Culture and Communication, M\"alardalen  University,  Box  883,  SE-721  23  V\"aster\r{a}s, Sweden}
\email{per.back@mdh.se}

\subjclass[2010]{17A30, 17B37, 16S80, 16S30}
\keywords{Hom-associative algebras, hom-Lie algebras, hom-associative Ore extensions, hom-associative quantum planes, hom-associative universal enveloping algebras, hom-associative deformations, hom-Lie deformations}

\begin{abstract}
In these notes, we introduce formal hom-associative deformations of the quantum planes and the universal enveloping algebras of the two-dimensional non-abelian Lie algebras. We then show that these deformations induce formal hom-Lie deformations of the corresponding Lie algebras constructed by using the commutator as bracket.
\end{abstract}

\maketitle

\section{Introduction}
A generic framework to rule deformations of Lie algebras arising from twisted derivations was proposed by Hartwig, Larsson, and Silvestrov in~\cite{HLS06}, the objects of such a deformation obeying a generalized Jacobi identity, now twisted by a \emph{hom}omorphism; hence they were coined \emph{hom-Lie algebras}, generalizing the notion of Lie algebras. Just as an associative algebra give rise to a Lie algebra by using the commutator as Lie bracket, the corresponding algebra that give rise to a hom-Lie algebra by the same construction is an algebra where the associativity condition is twisted by a homomorphism; the said algebra, first introduced by Makhlouf and Silvestrov in~\cite{MS08}, is called a \emph{hom-associative algebra}. Hom-associative algebras allow for killing three birds with one stone, as they include, apart from the purely twisted case, also associative algebras and general non-associative algebras. The former case corresponds to the twisting map being different from both the identity map and the zero map, whereas the latter two cases are formed by having it equal to the identity map and the zero map, respectively. 

Another class of algebras and rings that arise from twisted derivations are Ore extensions, or \emph{non-commutative polynomial rings}, as they were first named by Ore, who introduced them in 1933~\cite{Ore33}. The notion of non-associative Ore extensions were put forward by Nystedt, \"Oinert, and Richter in the unital case~\cite{NOR18}, and then generalized to the non-unital, hom-associative setting in~\cite{BRS18} by Richter, Silvestrov, and the author. The theory was further developed by Richter and the author in~\cite{BR18}, introducing a Hilbert's basis theorem for unital, hom-associative and general non-associative Ore extensions. 

In these notes, we show that the hom-associative quantum planes and universal enveloping algebras of the two-dimensional non-abelian Lie algebras as introduced in~\cite{BRS18} can be described as formal deformations of their associative counterparts. We also show that these induce formal deformations of the corresponding Lie algebras into hom-Lie algebras, using the commutator as bracket. More formally do we realize them as \emph{one-parameter formal hom-associative deformations} and \emph{one-parameter formal hom-Lie deformations}, as introduced by Makhlouf and Silvestrov in~\cite{MS10}, some examples thereof including algebras otherwise considered being rigid.

\section{Preliminaries}
In these notes, by a \emph{non-associative} algebra, we mean an algebra which is not necessarily associative. A non-associative algebra $A$ is called \emph{unital} if it has an element $1\in A$ such that for all $a\in A$, $a\cdot 1=1\cdot a=a$. It is called \emph{non-unital} if it does not necessarily have such an element. We denote by $\mathbb{N}_0$ the nonnegative integers.

\subsection{Hom-associative algebras and hom-Lie algebras}
This section is devoted to restating some basic definitions and general facts concerning hom-associative algebras and hom-Lie algebras. Though hom-associative algebras as first introduced in \cite{MS08} and hom-Lie algebras in \cite{HLS06} were defined by starting from vector spaces, we take a slightly more general approach here, following the conventions in \cite{BRS18}, starting from modules; most of the general theory still hold in this latter case, though.

\begin{definition}[Hom-associative algebra]\label{def:hom-assoc-algebra} A \emph{hom-associative algebra} over an associative, commutative, and unital ring $R$, is a triple $(M,\cdot,\alpha)$ consisting of an $R$-module $M$, a binary operation $\cdot\colon M\times M\to M$ linear over $R$ in both arguments, and an $R$-linear map $\alpha\colon M\to M$ satisfying, for all $a,b,c\in M$, $\alpha(a)\cdot(b\cdot c)=(a\cdot b)\cdot\alpha(c)$.
\end{definition}

Since $\alpha$ twists the associativity, we will refer to it as the \emph{twisting map}, and unless otherwise stated, it is understood that $\alpha$ without any further reference will always denote the twisting map of a hom-associative algebra. A \emph{multiplicative} hom-associative algebra is one where the twisting map is multiplicative.

\begin{remark}
A hom-associative algebra over $R$ is in particular a non-unital, non-associative $R$-algebra, and in case $\alpha=\mathrm{id}_M$, a non-unital, associative $R$-algebra. In case $\alpha=0_M$, the hom-associative condition becomes null, and thus hom-associative algebras can be seen as both generalizations of associative and non-associative algebras.
\end{remark}

\begin{definition}A \emph{hom-associative ring} is a hom-associative algebra over the integers.
\end{definition}

\begin{definition}[Weakly unital hom-associative algebra]\label{def:weak-hom}
Let $A$ be a hom-associative algebra. If for all $a\in A$, $e\cdot a=a\cdot e=\alpha(a)$ for some $e\in A$, we say that $e$ is a \emph{weak unit}, and that $A$ is \emph{weakly unital}.
\end{definition}

\begin{remark}The notion of a weak unit can thus be seen as a weakening of that of a unit. A weak unit, when it exists, need not be unique however.
\end{remark}

\begin{proposition}[\cite{yau09, FG09}]\label{prop:star-alpha-mult} Let $A$ be a unital, associative algebra with unit $1_A$, $\alpha$ an algebra endomorphism on $A$, and define $*\colon A\times A\to A$ by $a* b:=\alpha(a\cdot b)$ for all $a,b\in A$. Then $(A,*,\alpha)$ is a weakly unital hom-associative algebra with weak unit $1_A$.
\end{proposition}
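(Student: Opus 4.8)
The plan is to verify the three requirements in turn: that $*$ is $R$-bilinear, that the triple $(A,*,\alpha)$ satisfies the hom-associative identity of \autoref{def:hom-assoc-algebra}, and that $1_A$ is a weak unit in the sense of \autoref{def:weak-hom}. Throughout I would lean on the two defining properties of $\alpha$ as an algebra endomorphism, namely its $R$-linearity and its multiplicativity $\alpha(a\cdot b)=\alpha(a)\cdot\alpha(b)$. Bilinearity of $*$ is the easiest point: since $a*b=\alpha(a\cdot b)$ is the composite of the $R$-bilinear map $\cdot$ with the $R$-linear map $\alpha$, it is immediately $R$-linear in each argument, and no computation beyond unwinding definitions is needed.

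The heart of the argument is the hom-associative identity, which I would establish by expanding each side using only the definition of $*$, multiplicativity of $\alpha$, and associativity of $\cdot$. On the left,
\[
\alpha(a)*(b*c)=\alpha\bigl(\alpha(a)\cdot\alpha(b\cdot c)\bigr)=\alpha\bigl(\alpha\bigl(a\cdot(b\cdot c)\bigr)\bigr),
\]
where the first equality applies the definition of $*$ twice and the second uses multiplicativity of $\alpha$. The same two moves applied on the right give $(a*b)*\alpha(c)=\alpha\bigl(\alpha\bigl((a\cdot b)\cdot c\bigr)\bigr)$. Since $\cdot$ is associative, $a\cdot(b\cdot c)=(a\cdot b)\cdot c$, so both sides collapse to $\alpha\bigl(\alpha(a\cdot b\cdot c)\bigr)$ and therefore coincide.

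It remains to check that $1_A$ is a weak unit, which I would do directly: for every $a\in A$ one has $1_A*a=\alpha(1_A\cdot a)=\alpha(a)$ and $a*1_A=\alpha(a\cdot 1_A)=\alpha(a)$, using only that $1_A$ is a unit for $\cdot$. I do not anticipate a genuine obstacle here; the single point that repays care is keeping track of where multiplicativity of $\alpha$, rather than mere linearity, is invoked, since it is exactly this property of the endomorphism that lets the product sitting inside $\alpha$ be rearranged before associativity of $\cdot$ is brought to bear.
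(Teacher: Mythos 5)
Your proof is correct: the computation $\alpha(a)*(b*c)=\alpha\bigl(\alpha\bigl(a\cdot(b\cdot c)\bigr)\bigr)=\alpha\bigl(\alpha\bigl((a\cdot b)\cdot c\bigr)\bigr)=(a*b)*\alpha(c)$ via multiplicativity of $\alpha$ and associativity of $\cdot$, together with the direct checks of bilinearity and of $1_A*a=a*1_A=\alpha(a)$, is exactly the standard verification of Yau's twist construction. The paper itself gives no proof, citing \cite{yau09, FG09} instead, and your argument matches the one found in those references.
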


Note that we are abusing the notation in \autoref{def:hom-assoc-algebra} a bit here; $A$ in $(A,*,\alpha)$ does really denote the algebra and not only its module structure. From now on, we will always refer to this construction when writing $*$.

\begin{definition}[Hom-Lie algebra]\label{def:hom-lie} A \emph{hom-Lie algebra} over an associative, commutative, and unital ring $R$ is a triple $(M, [\cdot,\cdot], \alpha)$ where $M$ is an $R$-module, $\alpha\colon M\to M$ a linear map called the \emph{twisting map}, and $[\cdot,\cdot]\colon M\times M\to M$ a map called the \emph{hom-Lie bracket}, satisfying the following axioms for all $x,y,z\in M$ and $a,b\in R$:
\begin{align*}
[ax+by,z]=a[x,z]+b[y,z], \quad [x,ay+bz]=a[x,y]+b[x,z],\quad\text{(bilinearity)},\\
[x,x]=0,\quad\text{(alternativity)},\\
\left[\alpha(x),[y,z]\right]+\left[\alpha(z),[x,y]\right]+\left[\alpha(y),[z,x]\right]=0,\quad\text{(hom-Jacobi identity)}.\label{eq:hom-jacobi}
\end{align*}
\end{definition}
As in the case of a Lie algebra, we immediately get anti-commutativity from the bilinearity and alternativity by calculating $0=[x+y,x+y]=[x,x]+[x,y]+[y,x]+[y,y]=[x,y]+[y,x]$, so $[x,y]=-[y,x]$ holds for all $x$ and $y$ in a hom-Lie algebra as well. Unless $R$ has characteristic two, anti-commutativity also implies alternativity, since $[x,x]=-[x,x]$ for all $x$.

\begin{remark}If $\alpha=\mathrm{id}_M$ in \autoref{def:hom-lie}, we get the definition of a Lie algebra. Hence the notion of a hom-Lie algebra can be seen as generalization of that of a Lie algebra.
\end{remark}

\begin{proposition}[\cite{MS08}]\label{prop:commutator-construction} Let $(M,\cdot,\alpha)$ be a hom-associative algebra, and define $[x,y]:=x\cdot y-y\cdot x$ for all $x,y\in M$. Then $(M,[\cdot,\cdot],\alpha)$ is a hom-Lie algebra.
\end{proposition}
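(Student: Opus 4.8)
The plan is to verify the three defining axioms of a hom-Lie algebra from \autoref{def:hom-lie} in turn, for the bracket $[x,y]:=x\cdot y-y\cdot x$. Bilinearity and alternativity are immediate: since $\cdot$ is $R$-linear in each argument, so is the commutator in each argument, which gives bilinearity; and $[x,x]=x\cdot x-x\cdot x=0$ gives alternativity. All the content therefore lies in the hom-Jacobi identity.

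For the hom-Jacobi identity I would expand the left-hand side $[\alpha(x),[y,z]]+[\alpha(z),[x,y]]+[\alpha(y),[z,x]]$ fully in terms of the product $\cdot$. Each of the three outer brackets, once its inner bracket is expanded as well, produces four triple products, for twelve terms in total. Half of these are \emph{left-associated}, of the form $\alpha(a)\cdot(b\cdot c)$, and half are \emph{right-associated}, of the form $(b\cdot c)\cdot\alpha(a)$.

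The key step is to invoke the hom-associativity relation $\alpha(a)\cdot(b\cdot c)=(a\cdot b)\cdot\alpha(c)$ from \autoref{def:hom-assoc-algebra} to rewrite each of the six left-associated terms as a right-associated one. After this rewriting, every term in the sum has the common shape $(\,\cdot\,\cdot\,\cdot\,)\cdot\alpha(\,\cdot\,)$, and I would then collect the terms according to their underlying monomial. One checks that each of the six distinct monomials $(x\cdot y)\cdot\alpha(z)$, $(x\cdot z)\cdot\alpha(y)$, $(y\cdot x)\cdot\alpha(z)$, $(y\cdot z)\cdot\alpha(x)$, $(z\cdot x)\cdot\alpha(y)$, and $(z\cdot y)\cdot\alpha(x)$ occurs exactly twice, once with a plus sign and once with a minus sign, so that the entire sum vanishes.

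The proof is essentially a bookkeeping exercise, and I expect the only point requiring genuine care to be applying hom-associativity in the correct direction, so that all twelve terms are brought to a single associativity convention before cancellation. It is worth noting that no multiplicativity of $\alpha$ is needed for this argument, and that the cancellation is insensitive to the characteristic of $R$.
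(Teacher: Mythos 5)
Your proof is correct, and the computation checks out: after expanding the hom-Jacobi sum into twelve triple products and using hom-associativity $\alpha(a)\cdot(b\cdot c)=(a\cdot b)\cdot\alpha(c)$ to right-associate the six left-associated terms, each of the six monomials $(a\cdot b)\cdot\alpha(c)$ indeed appears once with each sign and the sum vanishes, with no multiplicativity of $\alpha$ or characteristic assumption needed. The paper itself gives no proof, citing \cite{MS08}, and your expand-and-cancel verification is exactly the standard argument found there, so there is nothing to add.
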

Note that when $\alpha$ is the identity map, one recovers the classical construction of a Lie algebra from an associative algebra. We refer to the above construction as the \emph{commutator construction}.

\subsection{Non-unital, hom-associative Ore extensions}
Here, we give some preliminaries from the theory of non-unital, hom-associative Ore extensions, as introduced in~\cite{BRS18}.

First, if $R$ is a non-unital, non-associative ring, a map $\beta\colon R\to R$ is called \emph{left $R$-additive} if for all $r,s,t\in R$, we have $r\cdot\beta(s+t)=r\cdot \beta(s)+r\cdot\beta(t)$. If given two such left $R$-additive maps $\delta$ and $\sigma$ on a non-unital, non-associative ring $R$, by a \emph{non-unital, non-associative Ore extension} of $R$, written $R[x;\sigma,\delta]$ we mean the set of formal sums $\sum_{i\in\mathbb{N}_0}a_ix^i$ where finitely many $a_i\in R$ are non-zero, equipped with the following addition:
\begin{equation*}
\sum_{i\in\mathbb{N}_0}a_ix^i+\sum_{i\in\mathbb{N}_0}b_ix^i=\sum_{i\in\mathbb{N}_0}(a_i+b_i)x^i,\quad a_i,b_i\in R,
\end{equation*}
and the following multiplication, first defined on \emph{monomials} $ax^m$ and $bx^n$ where $m,n\in\mathbb{N}_0$:
\begin{equation*}
ax^m\cdot bx^n=\sum_{i\in\mathbb{N}_0}(a\cdot\pi_i^m(b))x^{i+n}
\end{equation*}
and then extended to arbitrary \emph{polynomials} $\sum_{i\in\mathbb{N}_0}a_ix^i$ in $R[x;\sigma,\delta]$ by imposing distributivity. The function $\pi_i^m\colon R\to R$ is defined as the sum of all $\binom{m}{i}$ compositions of $i$ instances of $\sigma$ and $m-i$ instances of $\delta$, so that for example $\pi_2^3=\sigma\circ\sigma\circ\delta+\sigma\circ\delta\circ\sigma+\delta\circ\sigma\circ\sigma$, and by definition, $\pi_0^0=\mathrm{id}_R$. Whenever $i<0$, or $i>m$, we put $\pi_i^m\equiv 0$. That this really gives an extension of the ring $R$, as suggested by the name, can now be seen by the fact that $ax^0\cdot bx^0=\sum_{i\in\mathbb{N}_0}(a\cdot \pi_i^0(b))x^{i+0}=(a\cdot\pi_0^0(b))x^0=(a\cdot b)x^0$, and similarly $ax^0+bx^0=(a+b)x^0$ for any $a,b\in R$. Hence the isomorphism $a\mapsto ax^0$ embeds $R$ into $R[x;\sigma,\delta]$.

Now, starting with a non-unital, non-associative ring $R$ equipped with two left $R$-additive maps $\delta$ and $\sigma$ and some additive map $\alpha\colon R\to R$, we say that we \emph{extend $\alpha$ homogeneously} to $R[x;\sigma, \delta]$ by putting $\alpha(ax^m)=\alpha(a)x^m$ for all $ax^m\in R[x;\sigma,\delta]$, and then assuming additivity. If $\alpha$ is further assumed to be multiplicative, we can modify a non-unital (unital), associative Ore extension into a non-unital (weakly unital), hom-associative Ore extension by using this extension, as the following proposition demonstrates:

\begin{proposition}[\cite{BRS18}]\label{prop:hom*ore} Let $R[x;\sigma,\delta]$ be a non-unital, associative Ore extension of a non-unital, associative ring $R$, and $\alpha\colon R\to R$ a ring endomorphism that commutes with $\delta$ and $\sigma$. Then $\left(R[x;\sigma,\delta],*,\alpha\right)$ is a multiplicative, non-unital, hom-associative Ore extension with $\alpha$ extended homogeneously to $R[x;\sigma,\delta]$.
\end{proposition}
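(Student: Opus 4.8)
The plan is to reduce the statement to two ingredients. First I would show that the homogeneous extension of $\alpha$ is a ring endomorphism of the underlying associative Ore extension $(R[x;\sigma,\delta],\cdot)$. Granting this, the twisted product $*$ given by $p*q:=\alpha(p\cdot q)$ is exactly the construction of \autoref{prop:star-alpha-mult}, so hom-associativity of $*$ and multiplicativity of $\alpha$ with respect to $*$ follow from a short computation. Since $R$ is only assumed non-unital I cannot invoke \autoref{prop:star-alpha-mult} verbatim, as it is stated for unital algebras; however, the identities I need do not use the unit, so I would verify them directly.

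For the first ingredient, the extended $\alpha$ is additive and well defined on finitely supported sums by construction, so the content is multiplicativity. On monomials I would expand
\[
\alpha(ax^m\cdot bx^n)=\alpha\!\left(\sum_{i\in\mathbb{N}_0}(a\cdot\pi_i^m(b))x^{i+n}\right)=\sum_{i\in\mathbb{N}_0}\alpha\!\left(a\cdot\pi_i^m(b)\right)x^{i+n},
\]
pulling $\alpha$ through the sum by additivity of the homogeneous extension and using $\alpha(cx^k)=\alpha(c)x^k$. Because $\alpha$ is a ring homomorphism on $R$, each summand splits as $\alpha(a)\cdot\alpha(\pi_i^m(b))$; the crucial point, isolated next, is that $\alpha$ commutes with every $\pi_i^m$, so that $\alpha(\pi_i^m(b))=\pi_i^m(\alpha(b))$. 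Reassembling the sum then gives precisely $\alpha(a)x^m\cdot\alpha(b)x^n=\alpha(ax^m)\cdot\alpha(bx^n)$, and the extension to arbitrary polynomials is immediate from additivity and distributivity of the Ore multiplication.

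The key lemma, and the step I expect to be the main obstacle, is the identity $\alpha\circ\pi_i^m=\pi_i^m\circ\alpha$. Recall that $\pi_i^m$ is the sum of the $\binom{m}{i}$ compositions of $i$ copies of $\sigma$ and $m-i$ copies of $\delta$. From the hypotheses $\alpha\circ\sigma=\sigma\circ\alpha$ and $\alpha\circ\delta=\delta\circ\alpha$, a straightforward induction on word length shows that $\alpha$ commutes with each individual composition; commuting $\alpha$ past the finite sum over compositions, again by additivity of $\alpha$, then yields the claim. This is the only place where the commutation hypotheses on $\alpha$, $\sigma$, and $\delta$ are used, and the principal care required is keeping the bookkeeping of the sum defining $\pi_i^m$ honest.

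For the second ingredient, with $\alpha$ now a ring endomorphism of the associative ring $(R[x;\sigma,\delta],\cdot)$, I would use multiplicativity of $\alpha$ together with associativity of $\cdot$ to compute, for all $p,q,r\in R[x;\sigma,\delta]$,
\[
\alpha(p)*(q*r)=\alpha\!\left(\alpha(p)\cdot\alpha(q\cdot r)\right)=\alpha^2\!\left(p\cdot(q\cdot r)\right)=\alpha^2\!\left((p\cdot q)\cdot r\right)=(p*q)*\alpha(r),
\]
which establishes hom-associativity, and $\alpha(p*q)=\alpha^2(p\cdot q)=\alpha(p)*\alpha(q)$, which establishes that $\alpha$ is multiplicative for $*$. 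Neither computation uses a unit, so both hold in the non-unital setting; in the unital case one moreover recovers that $1_R$ is a weak unit exactly as in \autoref{prop:star-alpha-mult}. Together these complete the proof.
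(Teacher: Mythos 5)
Your argument is correct, and it follows essentially the route behind the statement: the present paper gives no proof of \autoref{prop:hom*ore}, quoting it from \cite{BRS18}, and the proof there is the same two-step reduction you describe --- first the key lemma $\alpha\circ\pi_i^m=\pi_i^m\circ\alpha$, obtained termwise in the sum defining $\pi_i^m$ from $\alpha\circ\sigma=\sigma\circ\alpha$ and $\alpha\circ\delta=\delta\circ\alpha$, whence the homogeneous extension of $\alpha$ is an endomorphism of the associative ring $(R[x;\sigma,\delta],\cdot)$, and then the twist computation of \autoref{prop:star-alpha-mult}. Your remark that \autoref{prop:star-alpha-mult} cannot be cited verbatim but that its two identities never use the unit, so the non-unital setting costs nothing, is also correct and is exactly the right amount of care.

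The one point your write-up leaves unaddressed is the phrase \emph{Ore extension} in the conclusion: what you establish is that $(R[x;\sigma,\delta],*,\alpha)$ is a multiplicative, non-unital, hom-associative ring, whereas the statement asserts it is a hom-associative \emph{Ore extension}, i.e., that $*$ is itself a multiplication of Ore type on the set of formal sums. Fortunately this follows in one line from identities you already have: on monomials,
\begin{equation*}
ax^m*bx^n=\sum_{i\in\mathbb{N}_0}\alpha\bigl(a\cdot\pi_i^m(b)\bigr)x^{i+n}=\sum_{i\in\mathbb{N}_0}\bigl(a*\pi_i^m(b)\bigr)x^{i+n},
\end{equation*}
since $a*c=\alpha(a\cdot c)$ for coefficients in $R$. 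This exhibits $*$ as precisely the Ore multiplication built from the same $\sigma$ and $\delta$ over the base ring $(R,*,\alpha|_R)$, the twist of $R$ in the sense of \autoref{prop:star-alpha-mult}; one checks in passing that $\sigma$ and $\delta$ remain left additive with respect to the twisted product, which is immediate from additivity of $\alpha$. Adding this observation would close the small gap between what you prove and the literal wording of the proposition.
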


\begin{remark} If $R[x;\sigma,\delta]$ in \autoref{prop:hom*ore} is unital with unit 1, then $(R[x;\sigma,\delta],*,\alpha)$ is weakly unital with weak unit 1 by \autoref{prop:star-alpha-mult}.
\end{remark}

\begin{example}[Hom-associative quantum planes~\cite{BRS18}]\label{ex:hom-quant} The quantum planes $Q_q(K)$ over some field $K$ of characteristic zero are the free, associative, and unital algebras $K\langle x,y\rangle$ modulo the relation $x\cdot y=qy\cdot x$, where $q\in K^\times$, the multiplicative group of nonzero elements. These are in turn isomorphic to the unital, and associative Ore extensions $K[y][x;\sigma,0_{K[y]}]$, where $\sigma$ is the $K$-algebra automorphism on $K[y]$ defined by $\sigma(y)=qy$.

The only endomorphism that commute with $\sigma$ (and $0_{K[y]}$) on $K[y]$ turns out to be $\alpha_k$, defined by $\alpha_k(y)=ky$ and $\alpha(1_K)=1_K$ for some $k\in K^\times$. Extending $\alpha_k$ to arbitrary monomials in $y$ by $\alpha_k(ay^m)=a\alpha_k^m(y)$ for any $a\in K, m\in\mathbb{N}_0$, and then homogeneously to all of $K[y][[x;\sigma,0_{K[y]}]$ gives, in the light of \autoref{prop:hom*ore}, the \emph{hom-associative quantum planes} $Q_q^k(K)=(Q_q(K),*,\alpha_k)$. This is a $k$-family $\{Q_q^k(K)\}_{k\in K}$ of weakly unital hom-associative Ore extensions with weak unit $1_K$, the $q$-commutation relation $x\cdot y=qy\cdot x$ becoming $x* y=kqy*x$. One can note that $Q^k_q(K)$ is associative if $k=1_K$, and by a straightforward calculation is $x*(y*y)-(x*y)*y=(k-1_K)k^3q^2y^2x$. Since $K$ is a field and thus contains no zero divisors, $Q^k_q(K)$ is associative if and only if $k=1_K$.
\end{example}

\begin{example}[Hom-associative universal enveloping algebras~\cite{BRS18}]\label{ex:hom-uni} The only non-abelian two-dimensional Lie algebra $L$ with basis $\{x,y\}$ over a field $K$ of characteristic zero is, up to isomorphism, defined by the Lie bracket $[x,y]_L=y$. Its universal enveloping algebra $U(L)$ is isomorphic to the unital, associative Ore extension $K[y][x;\mathrm{id}_{K[y]},\delta]$, where $\delta=y\frac{\mathrm{d}}{\mathrm{d}y}$.

The only endomorphism that commute with $\delta$ (and $\mathrm{id}_{K[y]}$) on $K[y]$ is $\alpha_k$, defined by $\alpha_k(y)=ky$ and $\alpha(1_K)=1_K$ for some $k\in K^\times$. Extending $\alpha_k$ just as in \autoref{ex:hom-quant} gives us the \emph{hom-associative universal enveloping algebras of $L$}, $U^k(L)=(U(L),*,\alpha_k)$. Again, we get a $k$-family $\{U^k(L)\}_{k\in K}$ of weakly unital hom-associative Ore extensions with weak unit $1_K$, the commutation relation $x\cdot y-y\cdot x=y$ becoming $x*y-y*x=ky$. As in the case of $Q^k_q(K)$, one can note that $U^k(L)$ is associative if $k=1_K$, and by a similar computation, $x*(y*y)-(x*y)*y=(k-1_K)k^3y^2(x+2)$, so $U^k(L)$ is associative if and only if $k=1_K$.

\end{example}

\section{One-parameter formal deformations }
\emph{One-parameter formal hom-associative deformations} and \emph{one-pa\-ram\-e\-ter formal hom-Lie deformations} were first introduced by Makhlouf and Silvestrov in~\cite{MS10}, and later expanded on by Ammar, Ejbehi and Makhlouf in~\cite{AEM11}, and then by Hurle and Makhlouf in~\cite{HM18}. The idea behind these kinds of deformations is to deform not only the multiplication map or the Lie bracket, but also the twisting map $\alpha$, resulting also in a deformation of the twisted associativity condition and the twisted Jacobi identity, respectively. In the special case when the deformations start from $\alpha$ being the identity map and the multiplication being associative or the bracket being the Lie bracket, one gets a deformation of an associative algebra into a hom-associative algebra, and in the latter case a deformation of a Lie algebra into a hom-Lie algebra. Perhaps the main motivation for studying these kinds of deformations is that they provide a framework in which some algebras can now be deformed, which otherwise could not when considered as objects of the category of associative algebras or that of Lie algebras, in which they are rigid.

In this section, we show that $Q^k_q(K)$ and $U^k(L)$ can be seen as one-parameter formal hom-associative deformations of $Q_q(K)$ and $U(L)$, respectively, and that these also give rise to one-parameter formal hom-Lie deformations of the corresponding Lie algebras induced by the classical commutator construction. Here, we use a slightly more general approach than that given in~\cite{MS10}, replacing vector spaces by modules; this follows our convention in the preliminaries and previous work (cf.~\cite{BRS18,BR18}), with the advantage of being able to treat rings as algebras. First, if $R$ is an associative, commutative, and unital ring, and $M$ an $R$-module, we denote by $R\llbracket t\rrbracket$ the formal power series ring in the indeterminate $t$, and by $M\llbracket t\rrbracket$ the $R\llbracket t\rrbracket$-module of formal power series in the same indeterminate, but with coefficients in $M$. By \autoref{def:hom-assoc-algebra}, this allows us to define a hom-associative algebra $(M\llbracket t\rrbracket, \cdot_t,\alpha_t)$ over $R\llbracket t\rrbracket$.

\begin{definition}[One-parameter formal hom-associative deformation] A \emph{one-pa\-ram\-e\-ter formal hom-associative deformation} of a hom-associative algebra $(M,\cdot_0,\alpha_0)$ over $R$, is a hom-associative algebra $(M\llbracket t\rrbracket, \cdot_t,\alpha_t)$ over $R\llbracket t\rrbracket$, where
\begin{equation*}
\cdot_t=\sum_{i=0}^\infty \cdot_i t^i,\quad \alpha_t=\sum_{i=0}^\infty \alpha_it^i,
\end{equation*}
and for each $i\in\mathbb{N}_0$, $\cdot_i\colon M\times M\to M$ is a binary operation linear over $R$ in both arguments, and $\alpha_i\colon M\to M$ an $R$-linear map, extended homogeneously to a binary operation linear over $R\llbracket t\rrbracket$ in both arguments, $\cdot_i\colon M\llbracket t\rrbracket\times M\llbracket t\rrbracket\to M\llbracket t\rrbracket$, and an $R\llbracket t\rrbracket$-linear map $\alpha_i\colon M\llbracket t\rrbracket\to M\llbracket t\rrbracket$, respectively.
\end{definition}
Here, and onwards, a homogeneous extension is defined analogously to that of an Ore extension, meaning that for any $r_1,r_2\in R$, $m_1,m_2\in M$, and $i,j,l\in\mathbb{N}_0$, we have $\alpha_i(r_1m_1t^j+r_2m_2t^l)=r_1\alpha_i(m_1)t^j + r_2\alpha_i(m_2)t^l$, and similarly for the product $\cdot_i$.

\begin{proposition}\label{prop:quant-deform} $Q_q^k(K)$ is a one-parameter formal hom-associative deformation of $Q_q(K)$.
\end{proposition}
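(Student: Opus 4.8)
The plan is to promote the scalar $k$ to the formal parameter $1_K+t$ and to recover the family $\{Q^k_q(K)\}$ from a single hom-associative algebra over $K\llbracket t\rrbracket$. I would first regard $Q_q(K)$ as the hom-associative algebra $(Q_q(K),\cdot,\mathrm{id})$, where $\cdot$ denotes the associative Ore product; this is the object to be deformed, and the identification $Q_q(K)=Q^{1_K}_q(K)$ pins down the value at $t=0$, so that the undeformed structure maps must be $\cdot_0=\cdot$ and $\alpha_0=\mathrm{id}$.

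Next I would define the deformed twisting map on the $K$-basis $\{y^mx^n\}_{m,n\in\mathbb{N}_0}$ of $Q_q(K)$ by running the construction of \autoref{ex:hom-quant} with $k$ replaced by $1_K+t$, namely $\alpha_t(y^mx^n)=(1_K+t)^my^mx^n$. As $(1_K+t)^m=\sum_{i=0}^m\binom{m}{i}t^i$ is a polynomial, this produces the required expansion $\alpha_t=\sum_{i=0}^\infty\alpha_it^i$ in which $\alpha_i$ is the $K$-linear map $y^mx^n\mapsto\binom{m}{i}y^mx^n$, with $\alpha_0=\mathrm{id}$ since $\binom{m}{0}=1$. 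The deformed product is then the twisted product $a\cdot_tb:=\alpha_t(a\cdot b)$, whose coefficients $\cdot_i=\alpha_i\circ\cdot$ are $K$-bilinear and satisfy $\cdot_0=\mathrm{id}\circ\cdot=\cdot$. I would verify that these data meet the letter of the definition, i.e.\ that each $\alpha_i$ and $\cdot_i$ is a well-defined $K$-(bi)linear map extended homogeneously over $K\llbracket t\rrbracket$.

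For the hom-associativity I would avoid any direct computation and instead appeal to \autoref{prop:star-alpha-mult}. The map $\alpha_t$ is the $K\llbracket t\rrbracket$-linear extension of the assignment $y\mapsto(1_K+t)y$, $x\mapsto x$, which respects the defining relation $x\cdot y=qy\cdot x$ and is therefore a $K\llbracket t\rrbracket$-algebra endomorphism of the unital associative algebra $Q_q(K)\llbracket t\rrbracket$. Since $\cdot_t$ is exactly the associated twisted product $a\cdot_tb=\alpha_t(a\cdot b)=a*b$, \autoref{prop:star-alpha-mult} shows that $(Q_q(K)\llbracket t\rrbracket,\cdot_t,\alpha_t)$ is a weakly unital hom-associative algebra over $K\llbracket t\rrbracket$; one may equally read $\alpha_{1_K+t}$ as a ring endomorphism commuting with $\sigma$ and $0_{K[y]}$ and invoke \autoref{prop:hom*ore}. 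Combined with the zeroth-order identifications, this exhibits $(Q_q(K)\llbracket t\rrbracket,\cdot_t,\alpha_t)$ as a one-parameter formal hom-associative deformation of $Q_q(K)$, and identifying $1_K+t$ with $k$ returns $Q^k_q(K)$.

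I do not anticipate a serious obstacle, since hom-associativity is inherited essentially for free from \autoref{prop:star-alpha-mult} once $k$ is replaced by $1_K+t$. The two points that warrant care are the bookkeeping confirming that $\alpha_t$ and $\cdot_t$ really do split as power series with $K$-(bi)linear coefficients and the correct zeroth-order terms, and the check that $\alpha_{1_K+t}$ remains an algebra endomorphism after the substitution — which is just the multiplicativity computation of \autoref{ex:hom-quant} performed over $K\llbracket t\rrbracket$ in place of $K$.
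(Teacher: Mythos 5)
Your proposal is correct and follows essentially the same route as the paper's own proof: set $t=k-1_K$, define $\alpha_t$ on monomials $ay^mx^n$ via the binomial expansion of $((1_K+t)y)^m$, take $\cdot_t=\alpha_t\circ\cdot_0$ so that $\cdot_i=\alpha_i\circ\cdot_0$, and derive hom-associativity from \autoref{prop:hom*ore} exactly as in \autoref{ex:hom-quant}. Your extra remark that $\alpha_t$ respects the relation $x\cdot y=qy\cdot x$ and hence is a $K\llbracket t\rrbracket$-algebra endomorphism just makes explicit the verification the paper delegates to \autoref{ex:hom-quant}.
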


\begin{proof}We put $t:=k-1$, and regard $t$ as an indeterminate of the formal power series $K\llbracket t\rrbracket$ and $Q_q(K)\llbracket t\rrbracket$; this then gives us a deformation $(Q_q(K)\llbracket t\rrbracket,\cdot_t,\alpha_t)$ of $(Q_q(K),\cdot_0,\mathrm{id}_{Q_q(K)})$, where the latter simply is $Q_q(K)$ in the language of hom-associative algebras. Explicitly, we denote by $\cdot_0$ the multiplication in $Q_q(K)$, and put $\alpha_0:=\mathrm{id}_{Q_q(K)}$, and $\alpha_t(1_K)=1_K$. For any monomial $ay^mx^n\in Q_q(K)$, we define $\alpha_t(ay^mx^n):=a((t+1)y)^mx^n=\sum_{i=0}^m\binom{m}{i}ay^mx^nt^i\in Q_q(K)\llbracket t\rrbracket$, using the binomial expansion in the last step. Then we extend $\alpha_t$ linearly over $K\llbracket t\rrbracket$ and homogeneously to all of $Q_q(K)\llbracket t\rrbracket$. To define the multiplication $\cdot_t$ in $Q_q(K)\llbracket t \rrbracket$, we first extend $\cdot_0\colon Q_q(K)\times Q_q(K)\to Q_q(K)$ homogeneously to a binary operation $\cdot_0\colon Q_q(K)\llbracket t\rrbracket\times Q_q(K)\llbracket t\rrbracket\to Q_q(K)\llbracket t\rrbracket$ linear over $K\llbracket t\rrbracket$ in both arguments, and then simply compose $\alpha_t$ and $\cdot_0$, so that $\cdot_t:=\sum_{i=0}^\infty (\alpha_i\circ \cdot_0) t^i$. Hom-associativity now follows from \autoref{prop:hom*ore}, as explained in \autoref{ex:hom-quant}.
\end{proof}

\begin{proposition}\label{prop:universal-deform} $U^k(L)$ is a one-parameter formal hom-associative deformation of $U(L)$.
\end{proposition}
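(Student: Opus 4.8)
The plan is to follow the proof of \autoref{prop:quant-deform} almost verbatim, exploiting the fact that $U(L)$ and $Q_q(K)$ share the same underlying module: both carry the monomial basis $\{y^m x^n\}_{m,n\in\mathbb{N}_0}$ coming from their Ore extension presentation, and both are equipped with the same twisting map $\alpha_k$ (with $\alpha_k(y)=ky$). The only structural difference is that the degree-zero multiplication $\cdot_0$ is now governed by the derivation $\delta=y\frac{\mathrm{d}}{\mathrm{d}y}$ with $\sigma=\mathrm{id}_{K[y]}$, rather than by $\sigma(y)=qy$ with zero derivation. First I would put $t:=k-1$ and regard it as the indeterminate of $K\llbracket t\rrbracket$ and $U(L)\llbracket t\rrbracket$, so that the object to be built is a deformation $(U(L)\llbracket t\rrbracket,\cdot_t,\alpha_t)$ of $(U(L),\cdot_0,\mathrm{id}_{U(L)})$, the latter being $U(L)$ read in the language of hom-associative algebras.

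Next I would define the deformed twisting map on monomials by $\alpha_t(ay^mx^n):=a((t+1)y)^mx^n=\sum_{i=0}^m\binom{m}{i}ay^mx^n t^i$, together with $\alpha_t(1_K)=1_K$, and then extend $\alpha_t$ linearly over $K\llbracket t\rrbracket$ and homogeneously to all of $U(L)\llbracket t\rrbracket$; reading off the coefficient of $t^i$ yields the maps $\alpha_i$. The deformed multiplication would be defined by the same recipe as before, namely by composing with the undeformed product: after extending $\cdot_0$ homogeneously and $K\llbracket t\rrbracket$-bilinearly to $U(L)\llbracket t\rrbracket$, I would set $\cdot_t:=\sum_{i=0}^\infty(\alpha_i\circ\cdot_0)t^i$, that is, $\cdot_t=\alpha_t\circ\cdot_0$.

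It then remains to verify hom-associativity of $(U(L)\llbracket t\rrbracket,\cdot_t,\alpha_t)$, and here I would appeal to \autoref{prop:hom*ore}. Specializing $t$ to $k-1$ turns $\alpha_t$ into the endomorphism $\alpha_k$ and $\cdot_t=\alpha_t\circ\cdot_0$ into the product $*$, so the construction is exactly the $(R[x;\sigma,\delta],*,\alpha)$ of that proposition with $\sigma=\mathrm{id}_{K[y]}$ and $\delta=y\frac{\mathrm{d}}{\mathrm{d}y}$, as in \autoref{ex:hom-uni}. The single hypothesis to discharge is that $\alpha_k$ is a ring endomorphism commuting with $\mathrm{id}_{K[y]}$ and $\delta$, and I expect this to be the only genuinely content-bearing step, since it is the one place where the present case departs from the quantum-plane case ($\delta$ is now a nonzero derivation). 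It is, however, precisely what \autoref{ex:hom-uni} already records, holding by the computation $\alpha_k(\delta(y^m))=\alpha_k(my^m)=mk^my^m=\delta(k^my^m)=\delta(\alpha_k(y^m))$; the remaining verification is then purely formal bookkeeping identical to \autoref{prop:quant-deform}.
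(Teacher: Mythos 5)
Your proposal is correct and takes essentially the same approach as the paper: the paper's own proof consists precisely of setting $t:=k-1$ and invoking an argument analogous to that of \autoref{prop:quant-deform}, which is exactly the analogy you spell out in full. Your computation $\alpha_k(\delta(y^m))=mk^my^m=\delta(\alpha_k(y^m))$ correctly isolates the one hypothesis of \autoref{prop:hom*ore} where this case differs from the quantum-plane case, matching what \autoref{ex:hom-uni} records.
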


\begin{proof}This result follows from first putting $t:=k-1$, and then using an analogous argument to that in the proof of \autoref{prop:quant-deform}.
\end{proof}

From now on, we refer to the one-parameter formal hom-associative deformations of $Q_q(K)$ and $U(L)$ as just \emph{deformations}.

\begin{definition}[One-parameter formal hom-Lie deformation]A \emph{one-pa\-ra\-meter formal hom-Lie deformation} of a hom-Lie algebra $(M,[\cdot,\cdot]_0,\alpha_0)$ over $R$ is a hom-Lie algebra algebra $(M\llbracket t\rrbracket, [\cdot,\cdot]_t,\alpha_t)$ over $R\llbracket t\rrbracket$, where
\begin{equation*}
[\cdot,\cdot]_t=\sum_{i=0}^\infty [\cdot,\cdot]_i t^i,\quad \alpha_t=\sum_{i=0}^\infty \alpha_it^i,
\end{equation*}
and for each $i\in\mathbb{N}_0$, $[\cdot,\cdot]_i\colon M\times M\to M$ is a binary operation linear over $R$ in both arguments, and $\alpha_i\colon M\to M$ an $R$-linear map, extended homogeneously to a binary operation linear over $R\llbracket t\rrbracket$ in both arguments, $[\cdot,\cdot]_i\colon M\llbracket t\rrbracket\times M\llbracket t\rrbracket\to M\llbracket t\rrbracket$, and an $R\llbracket t\rrbracket$-linear map $\alpha_i\colon M\llbracket t\rrbracket\to M\llbracket t\rrbracket$, respectively.
\end{definition}

\begin{remark}Alternativity of $[\cdot,\cdot]_t$ is equivalent to alternativity of $[\cdot,\cdot]_i$ for all $i\in\mathbb{N}_0$.
\end{remark}

\begin{proposition}\label{prop:quant-lie-deform}The deformation of $Q_q(K)$ into $Q_q^k(K)$ induces a one-parameter formal hom-Lie deformation of the Lie algebra of $Q_q(K)$ into the hom-Lie algebra of $Q_q^k(K)$, using the commutator as bracket.
\end{proposition}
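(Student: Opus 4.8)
The plan is to obtain the hom-Lie deformation directly from the hom-associative deformation of \autoref{prop:quant-deform} by applying the commutator construction of \autoref{prop:commutator-construction}. Concretely, starting from the hom-associative deformation $(Q_q(K)\llbracket t\rrbracket,\cdot_t,\alpha_t)$ with $\cdot_t=\sum_{i=0}^\infty(\alpha_i\circ\cdot_0)t^i$, I would define the bracket $[x,y]_t:=x\cdot_t y-y\cdot_t x$ for all $x,y\in Q_q(K)\llbracket t\rrbracket$, retaining the same twisting map $\alpha_t$. Since $(Q_q(K)\llbracket t\rrbracket,\cdot_t,\alpha_t)$ is a hom-associative algebra over $K\llbracket t\rrbracket$, \autoref{prop:commutator-construction} immediately yields that $(Q_q(K)\llbracket t\rrbracket,[\cdot,\cdot]_t,\alpha_t)$ is a hom-Lie algebra over $K\llbracket t\rrbracket$, so bilinearity, alternativity, and the hom-Jacobi identity all come for free.

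It then remains to check that this hom-Lie algebra fits the definition of a one-parameter formal hom-Lie deformation, namely that $[\cdot,\cdot]_t$ decomposes as a power series in $t$ with $K$-bilinear coefficient maps, and that the $t=0$ term recovers the Lie algebra of $Q_q(K)$. For the decomposition I would use that each $\alpha_i$ is $K$-linear, whence
\[
[x,y]_t=\sum_{i=0}^\infty\big(\alpha_i(x\cdot_0 y)-\alpha_i(y\cdot_0 x)\big)t^i=\sum_{i=0}^\infty\alpha_i\big(x\cdot_0 y-y\cdot_0 x\big)t^i=\sum_{i=0}^\infty\big(\alpha_i\circ[\cdot,\cdot]_0\big)(x,y)\,t^i,
\]
so that $[\cdot,\cdot]_i=\alpha_i\circ[\cdot,\cdot]_0$. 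Each such map is $K$-bilinear, being the composite of the $K$-linear map $\alpha_i$ with the $K$-bilinear commutator $[\cdot,\cdot]_0$, and is alternating since $[x,x]_0=0$ forces $[x,x]_i=\alpha_i(0)=0$; the extension to $Q_q(K)\llbracket t\rrbracket$ is homogeneous, exactly as for $\cdot_t$ and $\alpha_t$.

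Finally, setting $t=0$ (equivalently $k=1_K$) gives $\alpha_0=\mathrm{id}_{Q_q(K)}$ and the classical commutator $[\cdot,\cdot]_0$, which is precisely the Lie algebra obtained from $Q_q(K)$; conversely, substituting $t=k-1$ back into $[\cdot,\cdot]_t$ and $\alpha_t$ returns the commutator bracket and twisting map $\alpha_k$ of $Q_q^k(K)$, so the construction does interpolate between the two as claimed. I do not expect a genuine obstacle: the hom-Lie axioms are delivered wholesale by \autoref{prop:commutator-construction}, and the only real content is the bookkeeping that lets the commutator be pulled through the formal power series, which hinges solely on the additivity and $K$-linearity of the maps $\alpha_i$. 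The one point I would treat with a little care is confirming that the coefficient brackets $[\cdot,\cdot]_i$ are genuine $K$-bilinear maps $Q_q(K)\times Q_q(K)\to Q_q(K)$ rather than maps into the larger module $Q_q(K)\llbracket t\rrbracket$; this holds because $\alpha_i(ay^mx^n)=\binom{m}{i}ay^mx^n$ lands in $Q_q(K)$, as read off from the binomial expansion in the proof of \autoref{prop:quant-deform}.
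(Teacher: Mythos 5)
Your proposal is correct and takes essentially the same route as the paper: since $x\cdot_t y=\alpha_t(x\cdot_0 y)$ for $x,y\in Q_q(K)$, your bracket $[x,y]_t:=x\cdot_t y-y\cdot_t x$ coincides with the paper's definition $[x,y]_t:=\alpha_t([x,y]_0)$ with the same decomposition $[\cdot,\cdot]_t=\sum_{i=0}^\infty(\alpha_i\circ[\cdot,\cdot]_0)t^i$, and both arguments obtain the hom-Lie axioms by applying \autoref{prop:commutator-construction} to the hom-associative deformation of \autoref{prop:quant-deform}. Your additional bookkeeping---that each $[\cdot,\cdot]_i=\alpha_i\circ[\cdot,\cdot]_0$ is a genuine $K$-bilinear map into $Q_q(K)$, and that $t=0$ recovers the classical commutator Lie algebra---only makes explicit what the paper leaves implicit.
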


\begin{proof}Using the deformation of $Q_q(K)$ into $Q_q^k(K)$ in \autoref{prop:quant-deform}, we put $t:=k-1$; this gives a deformation $(Q_q(K)\llbracket t\rrbracket, [\cdot,\cdot]_t,\alpha_t)$ of $(Q_q(K),[\cdot,\cdot]_0,\mathrm{id}_{Q_q(K)})$, where the latter is the Lie algebra of $Q_q(K)$ obtained from the commutator construction with $[\cdot,\cdot]_0$ as the commutator. To see this, we first note that by construction, $\alpha_t$ is the same map as defined in the proof of \autoref{prop:quant-deform}. Hence, we only need to show that $[\cdot,\cdot]_t$ is a deformation of the commutator $[\cdot,\cdot]_0$, and that the hom-Jacobi identity is satisfied. We first define $[\cdot,\cdot]_t$ by $[a,b]_t:=\alpha_t(a\cdot_0b)-\alpha_t(b\cdot_0 a)=\alpha_t(a\cdot_0b-b\cdot_0a)=:\alpha_t([a,b]_0)$, for any $a,b\in Q_q(K)$. Next, we extend $[\cdot,\cdot]_0\colon Q_q(K)\times Q_q(K)\to Q_q(K)$ homogeneously to a binary operation $[\cdot,\cdot]_0\colon Q_q(K)\llbracket t\rrbracket\times Q_q(K)\llbracket t\rrbracket\to Q_q(K)\llbracket t \rrbracket$ linear over $K\llbracket t\rrbracket$ in both arguments. $[\cdot,\cdot]_t\colon Q_q(K)\llbracket t\rrbracket\times Q_q(K)\llbracket t\rrbracket\to Q_q(K)\llbracket t \rrbracket$ is then defined as the composition of $\alpha_t$ and $[\cdot,\cdot]_0$, so that $[\cdot,\cdot]_t:=\sum_{i=0}^\infty (\alpha_i\circ [\cdot,\cdot]_0) t^i$. The hom-Jacobi identity is now satisfied by \autoref{prop:commutator-construction} and the construction given in \autoref{ex:hom-quant}.
\end{proof}

\begin{proposition}The deformation of $U(L)$ into $U^k(L)$ induces a one-parameter formal hom-Lie deformation of the Lie algebra of $U(L)$ into the hom-Lie algebra of $U^k(L)$, using the commutator as bracket.
\end{proposition}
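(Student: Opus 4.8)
The plan is to mirror the proof of \autoref{prop:quant-lie-deform} almost verbatim, replacing $Q_q(K)$ by $U(L)$ and $Q_q^k(K)$ by $U^k(L)$, and drawing on \autoref{prop:universal-deform} and \autoref{ex:hom-uni} in place of \autoref{prop:quant-deform} and \autoref{ex:hom-quant}. First I would set $t:=k-1$ and regard $t$ as the indeterminate of $K\llbracket t\rrbracket$ and $U(L)\llbracket t\rrbracket$, producing a candidate deformation $(U(L)\llbracket t\rrbracket,[\cdot,\cdot]_t,\alpha_t)$ of $(U(L),[\cdot,\cdot]_0,\mathrm{id}_{U(L)})$, where the latter is the Lie algebra obtained from the classical commutator construction on $U(L)$ with $[\cdot,\cdot]_0$ the commutator.

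The twisting map requires no new work: since $\alpha_k$ acts on $K[y]$ by $y\mapsto ky$ in exactly the same way in \autoref{ex:hom-uni} as in \autoref{ex:hom-quant}, the homogeneously extended $\alpha_t$ is literally the same map already constructed in the proof of \autoref{prop:universal-deform}, given on monomials by $\alpha_t(ay^mx^n)=a((t+1)y)^mx^n=\sum_{i=0}^m\binom{m}{i}ay^mx^nt^i$ via the binomial theorem. Thus it only remains to exhibit $[\cdot,\cdot]_t$ as a genuine deformation of $[\cdot,\cdot]_0$ and to verify the hom-Jacobi identity.

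For the bracket, I would set $[a,b]_t:=\alpha_t(a\cdot_0 b-b\cdot_0 a)=\alpha_t([a,b]_0)$ for $a,b\in U(L)$, extend $[\cdot,\cdot]_0$ homogeneously over $K\llbracket t\rrbracket$, and then define $[\cdot,\cdot]_t:=\sum_{i=0}^\infty(\alpha_i\circ[\cdot,\cdot]_0)t^i$. This makes the order-zero term $\alpha_0\circ[\cdot,\cdot]_0=[\cdot,\cdot]_0$, so $[\cdot,\cdot]_t$ reduces to the classical commutator at $t=0$ as required, each $\alpha_i\circ[\cdot,\cdot]_0$ is $R$-bilinear, and alternativity is inherited order-by-order from that of $[\cdot,\cdot]_0$. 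The hom-Jacobi identity is then immediate from \autoref{prop:commutator-construction} applied to the weakly unital hom-associative algebra $U^k(L)$ of \autoref{ex:hom-uni}, since $[\cdot,\cdot]_t$ is exactly the commutator of its deformed product: $a\cdot_t b-b\cdot_t a=\alpha_t(a\cdot_0 b)-\alpha_t(b\cdot_0 a)=\alpha_t(a\cdot_0 b-b\cdot_0 a)=[a,b]_t$.

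I expect no serious obstacle, as the argument is structurally identical to \autoref{prop:quant-lie-deform}; the only point demanding care is the bookkeeping needed to confirm that the commutator bracket coming from the hom-associative product agrees, coefficient by coefficient in $t$, with the prescribed sum $\sum_i(\alpha_i\circ[\cdot,\cdot]_0)t^i$, and that it recovers the correct deformed relation $x*y-y*x=ky$ of \autoref{ex:hom-uni} rather than the $q$-commutation relation of the quantum plane. Both follow directly from the definitions already in place, so the verification is routine.
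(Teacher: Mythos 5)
Your proposal is correct and follows exactly the route the paper takes: the paper's own proof simply sets $t:=k-1$ and declares the argument analogous to that of \autoref{prop:quant-lie-deform}, which is precisely the substitution you carry out explicitly (same $\alpha_t$, bracket $[\cdot,\cdot]_t:=\sum_{i=0}^\infty(\alpha_i\circ[\cdot,\cdot]_0)t^i$, hom-Jacobi via \autoref{prop:commutator-construction} applied to $U^k(L)$). Your verification that the commutator of $\cdot_t$ agrees coefficient-by-coefficient with $\alpha_t([\cdot,\cdot]_0)$ is just the detail the paper leaves implicit, so nothing is missing.
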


\begin{proof}Again, with $t:=k-1$, the result follows from an argument analogous to that used in \autoref{prop:quant-lie-deform}.
\end{proof}

\section*{Acknowledgements}
The author is grateful to Martin Bordemann and Benedikt Hurle for mentioning deformations in the context of \autoref{ex:hom-quant} and \autoref{ex:hom-uni}, and for kind hospitality during a visit in Mulhouse, as well as to Johan Richter and Rafael Reno S. Cantuba for a short but fruitful discussion on universal enveloping algebras.

\end{document}